\newcommand{\minitab}[2][l]{\begin{tabular}{#1}#2\end{tabular}}
\DeclareMathOperator{\E}{\mathsf{E}}
\DeclareMathOperator{\1}{\mathds{1}}
\DeclareMathOperator{\argmin}{\arg\!\min}
\newtheorem{lemma}{Lemma}
\newtheorem{proposition}{Proposition}
\title{A recursion-free functional approximation for the dynamic inventory problem}
\author{Onur A. Kilic}
\affil{\small Department of Operations, University of Groningen, Groningen, The Netherlands}
\author{S. Armagan Tarim}
\affil{\small Department of Business Information Systems, University College Cork, Cork, Ireland}
\date{
%\small {\today}
}
\begin{document}

\maketitle

\begin{abstract}
\noindent We consider the dynamic inventory problem with non-stationary demands. It has long been known that non-stationary~$(s,S)$ policies are optimal for this problem. However, finding optimal policy parameters remains a computational challenge as it requires solving a large-scale stochastic dynamic program. To address this, we devise a recursion-free approximation for the optimal cost function of the problem. This enables us to compute policy parameters heuristically, without resorting to a stochastic dynamic program. The heuristic is easy-to-understand and -use since it follows by elementary methods of convex minimization and shortest paths, yet it is very effective and outperforms earlier heuristics. 

\noindent \textbf{Keywords:} Stochastic inventory control; non-stationary demand; $(s,S)$ policy; approximation; heuristic 
\end{abstract}

\newpage

\section{Introduction}

Today, industries are experiencing non-stationary demand more frequently as product life cycles are getting increasingly shorter in response to fast technological progress and rapid changes in consumer preferences \citep{Simchi-Levi2003,Chopra2007}. When a product life cycle spans a short period of time, the magnitude of non-stationarity becomes drastic because demand rate changes rapidly as a product moves from one phase of the life cycle to another. Also, in most environments, demand is often heavily seasonal and has a significant trend. Therefore, the demand rate also changes within the phases of the product life cycle. The applicability of stationary inventory control methods in such environments is very limited. Hence, firms must employ inventory policies which can effectively match their supply to non-stationary demand \citep{Kurawarwala1996,Graves2008}. 

Managing inventories is more challenging when demand is non-stationary. This is because fluctuations in demand must be reflected in replenishments. In other words, non-stationary demand compels non-stationary inventory control. The complexity of managing inventories is further intensified when replenishments require fixed costs. Here, the non-stationarity of demand affects not only the size but also the timing of replenishments. This leads to a dynamic inventory problem where replenishment decisions must be made considering possible future demands besides impending inventory needs.

The characterization of optimal control policies for inventory problems has always been a center of interest in the inventory management literature. In this context, \citeauthor{Scarf1960}'s \citeyearpar{Scarf1960} proof of the optimality of~$(s,S)$ policies for the finite-horizon dynamic inventory problem is of particular importance. This seminal result paved the way for a large number of studies. \cite{Iglehart1963} showed that the optimal policy converges to a stationary~$(s,S)$ policy if the planning horizon is sufficiently long. This is followed by a variety of studies on exact and approximate methods to compute the optimal stationary~$(s,S)$ policy \citep[see e.g.][]{Veinott1965,Wagner1965,Johnson1968,Sivazlian1971,Archibald1978, Ehrhardt1979,Porteus1979,Federgruen1984,Zheng1991,Feng2000}. 

\citeauthor{Scarf1960}'s \citeyearpar{Scarf1960} proof the optimality of~$(s,S)$ policies immediately extends to non-stationary demands. However, the computation of non-stationary $(s,S)$ policies requires solving a large-scale stochastic dynamic program where a series of cost functions should be computed for each and every period in the planning horizon recursively. As mentioned by many scholars \citep[see e.g.][]{Karlin1960,Silver1978,Bollapragada1999,Neale2009}, such a numerical procedure is very complex and computationally expensive to be implemented in practice. Notwithstanding, not much work has been done on the non-stationary problem as compared to its stationary counterpart. \cite{Silver1978} and \cite{Askin1981} developed heuristics which determine policy parameters by using the least expected cost per period criterion. \cite{Bollapragada1999} proposed a conceptually different approach where the non-stationary problem is approximated by a series of stationary problems. These stationary problems are constructed by averaging demands over a number of consecutive periods based on an expected cycle time, and the policy parameters of each stationary problem are computed by means of stationary analysis.  \citet{Xiang2018} recently presented a method where policy parameters are approximated by iteratively solving a series of mixed integer non-linear programs for each period of the planning horizon. While effective, this method is computationally too expensive for practical use. 

We follow the aforementioned line of research and develop a new approach to compute policy parameters heuristically. Our approach relies on the idea of approximating the non-convex cost function by a sequence of convex cost functions associated with prospective replenishment cycles. The approximate cost function is recursion-free. It enables us to establish policy parameters by solving a series of root-finding problems and a single deterministic shortest path problem. We numerically compare the new heuristic against earlier heuristics by \cite{Askin1981} and \cite{Bollapragada1999} as well as the optimal stochastic dynamic program. The results show that our heuristic very effective and outperforms earlier heuristics. 

The rest of the paper is organized as follows. In Section~\ref{sec:definition}, we present a formal definition of the problem and provide the details of the optimal policy. In Section~\ref{sec:approx} we introduce our approximation of the optimal cost function. In Section~\ref{sec:heur}, we present our heuristic approach. In Section~\ref{sec:example}, we provide an illustrative example. In Section~\ref{sec:computation}, we present computational refinements that improve the computational efficiency of the heuristic. In Section~\ref{sec:numerical}, we conduct a numerical study for performance assessment. In Section~\ref{sec:stationary}, we discuss a stationary adaptation of the proposed approach and draw analogies with the existing literature. In Section~\ref{sec:conc}, we conclude.

\section{Problem definition and preliminaries}
\label{sec:definition}

We consider a finite-horizon periodic-review inventory system facing stochastic and non-stationary demands. The planning horizon consists of~$T$ time periods. Demands over these periods are denoted as $\xi_1,\ldots,\xi_T$. These are non-negative random variables which are independent but not necessarily identically distributed. We assume that replenishment orders are placed at the beginning of each period and received instantaneously. Then, demand is realized. Excess demand is backordered. There is a holding cost~$h$ per unit of on-hand inventory and a penalty cost~$p$ per unit of backlog carried from one period to the next. Each replenishment order incurs a fixed cost~$K$. The objective is to find a policy that minimizes the expected total cost over the planning horizon. We keep the problem definition restrictive for the sake of notational brevity. Our analysis and methods can accommodate systems with procurement and salvage costs, lead time, and discounting.

Throughout the paper, we let~$\1\{\cdot\}$ be the indicator function and~$\E$ be the expectation operator. We also define $x^+=\max\{x,0\}$ and $x^-=\max\{0,-x\}$. 

In what follows, we summarize some earlier results on the problem which will be functional in our analysis. We refer the reader to \citet{Scarf1960} for a complete analysis of the structure of the optimal inventory policy. 

We first consider the single-period cost function which provides the expected holding and penalty costs to be charged in period~$n$ if the after-replenishment inventory level is~$y$. This can be written as
\begin{align}\label{eq:Ln}
L_n(y) = h\E (y-\xi_n)^+ + p\E (y-\xi_n)^-
\end{align}
which is convex as holding and penalty costs are linear.

The optimal cost function providing the expected total costs from period~$n$ onwards satisfies the stochastic dynamic program 
\begin{align} \label{eq:Cn}
C_n(x) = \min_{x\leq y} \{K \1\{x<y\} + L_n(y) + \E C_{n+1}(y-\xi_n)\} 
\end{align}
where~$x$ and~$y$ respectively stand for the before- and after-replenishment inventory levels. The terminal cost reads~$C_{T+1}(x)=0$ for all $x$.

We can also write the optimal cost function with respect to the after-replenishment inventory level~$y$ as
\begin{align}\label{eq:Gn}
G_n(y) = L_n(y) + \E C_{n+1}(y-\xi_n).
\end{align}

\cite{Scarf1960} showed that~$C_n$ and~$G_n$ satisfy a functional property which he referred to as $K$-convexity. This property is sufficient to prove that the optimal replenishment policy in any period~$n$ is an~$(s_n,S_n)$ policy. That is, a replenishment order is issued in period~$n$ if the initial inventory level is below a re-order level~$s_n$, and, if so, the order quantity should increase the inventory level to an order-up-to level~$S_n$. 

The optimal policy parameters immediately follow from~$G_n$. The order-up-to level is the minimizer
\begin{align}\label{eq:Sn}
S_n = \argmin_y G_n(y).
\end{align}

The re-order level is the smallest inventory level whose cost exceeds the cost of the order-up-to level by a margin no more than the fixed replenishment cost. That is
\begin{align}\label{eq:sn}
s_n = \min\{y\mid G_n(y) \leq G_n(S_n) + K\}.
\end{align}

The aforementioned results fully characterize the parameters of the optimal policy. Nevertheless, finding these parameters is demanding as it requires computing a continuous cost function for each and every period recursively. It is possible to detour the continuity issue by assuming discrete demands \citep[see e.g.][]{Bollapragada1999,Lulli2004,Guan2008}. However, the difficulty prevails because one needs to consider all possible inventory levels and demand realizations in each period, both of which could be arbitrarily large in number.

%\section{Heuristic}
%\label{sec:heuristic}
%
%The main idea behind our heuristic is to avoid the extensive computational effort required to construct the optimal cost function for each and every period, and to use an approximation instead. In the remainder of this section, we first devise our approximate cost function and then explain how we determine policy parameters.

\section{The approximate cost function}
\label{sec:approx}

We now introduce an approximate cost function to avoid the extensive computational effort required to construct the optimal cost function recursively for each and every period. 

Let us begin by introducing the multi-period version of the single-period cost function in \eqref{eq:Ln}. Suppose a procurement lot becomes available in period~$n$ and increases the inventory level to~$y$. Also assume that the next procurement lot will not be available until period~$n+a$ ($a\geq 1$). That is, the length of the imminent replenishment cycle is~$a$ periods. We can write the expected total inventory costs over this replenishment cycle as
\begin{align}\label{eq:Lna}
L_{na}(y) = \sum_{k=1}^a \left(h\E(y-\xi_{nk})^+ + p\E(y-\xi_{nk})^- \right)
\end{align}
where $\xi_{nk}=\xi_n+\ldots+\xi_{n+k-1}$.

The multi-period cost function~$L_{na}$ is convex as it is a sum of convex functions. Its derivative reads $L_{na}^{'}(y)=-ap+(h+p) \sum_{k=1}^a \varphi_{nk}(y)$ where $\varphi_{nk}$ is the cumulative distribution function of accumulated demand~$\xi_{nk}$. Therefore, its minimizer~$y_{na}$ can be obtained by means of the optimality condition
\begin{align}\label{eq:yna}
y_{na} = \min\left\{y \,\middle\vert\, \frac{1}{a} \sum_{k=1}^a \varphi_{nk}(y) \geq \frac{p}{h+p}\right\}
\end{align}
and computed efficiently with a root-finding algorithm---provided that the distribution functions of accumulated demands are available.

We now consider an alternative inventory problem which we use as a proxy to introduce our approximation. This is the same as the original problem, except at each replenishment period the decision maker freely chooses a post-replenishment inventory level (i.e. negative order quantities are allowed) and explicitly specifies the timing of the next replenishment period. The essence of this problem is that the inventory costs in successive replenishment cycles are independent of each other and can be optimized myopically. Hence, it can be modeled as the following deterministic dynamic program 
\begin{align}\label{eq:vn}
v_n = \min_{1\leq a\leq T-n+1}\{\ell_{na} + v_{n+a}\}
\end{align}
where $\ell_{na}=K+L_{na}(y_{na})$ and $v_{T+1}=0$. It is easy to see that~\eqref{eq:vn} is a single-source shortest path problem. Hence, we can efficiently compute the optimal costs~$v_n$ and cycle lengths~$a_n$ for all periods in the planning horizon. 

%which can be regarded as a stochastic variant of the \citeauthor{Wagner1958}'s \citeyearpar{Wagner1958} deterministic lot-sizing model. 

We are now ready to present our approximation~$\hat{G}_n$ of the optimal cost function~$G_n$. The approximate cost function reads
\begin{align} \label{eq:Ghatn}
\hat{G}_n(y) = \min_{1\leq a\leq T-n+1} \{L_{na}(y) + v_{n+a}\}.
\end{align}

The approximate cost function has two attractive properties. First, it is recursion-free. It builds solely on the multi-period cost function~\eqref{eq:Lna} and the solution of the dynamic program~\eqref{eq:vn}. Second, its minimum can be attained very efficiently. Because it is defined as the minimum of a sequence of convex functions, it has multiple minima like the optimal cost function. But these local minima are aligned with the minima of its convex components, all of which can be obtained from~\eqref{eq:yna}.

\section{Heuristic approach}
\label{sec:heur}

Having established the approximate cost function, we proceed with our heuristic approach to compute approximate policy parameters. To that end, our course of action is simple. We adopt the definitions in~\eqref{eq:Sn} and~\eqref{eq:sn} but use the approximate cost function~$\hat{G}_n$ instead of the optimal cost function~$G_n$. 

Following~\eqref{eq:Sn}, we set the approximate order-up-to level~$\hat{S}_n$ as the minimizer of~$\hat{G}_n$. Hence, we have
\begin{align}\label{eq:Shatn}
\hat{S}_n = y_{na_n}
\end{align}
which becomes available once~\eqref{eq:vn} has been solved.

In line with~\eqref{eq:sn}, we set the approximate re-order level~$\hat{s}_n$ as the smallest inventory level whose cost exceeds the minimum of the approximate cost function by a margin no more than the fixed replenishment cost~$K$. It follows from~\eqref{eq:vn} and~\eqref{eq:Ghatn} that the minimum of the approximate cost function is~$v_n-K$. Thus, we have
\begin{align}\label{eq:shatn}
\begin{split}
\hat{s}_n	= \min\{y\mid \hat{G}_n(y) \leq v_n\}. 
\end{split}
\end{align}

Because $\hat{G}_n(y)$ is defined as the minimum of convex functions $L_{na}(y)+v_{n+a}$ for different values of~$a$, we can re-write~\eqref{eq:shatn} as
\begin{align}\label{eq:shatn_cycle}
\hat{s}_n = \min_{1\leq a\leq T-n+1}\min\{y\mid L_{na}(y) + v_{n+a} \leq v_n\}
\end{align}
which suggests that~$\hat{s}_n$ can be obtained by applying a root-finding routine on each convex component independently.

\section{An illustrative example}
\label{sec:example}

We now provide a simple numerical example to better illustrate the approximate cost function and the heuristic approach. Let us consider a 4-period problem instance with cost parameters $h=1$, $p=10$, and $K=100$. The period demands are discrete random variables uniformly distributed around their average values as $\xi_n\sim \mathcal{U}(\mu_n-\Delta,\mu_n+\Delta)$ where $\mu=[60,15,30,40]$ and $\Delta=10$. 

\begin{figure}[htbp]
\vspace{.2cm}
\centering
\includegraphics[width=\textwidth]{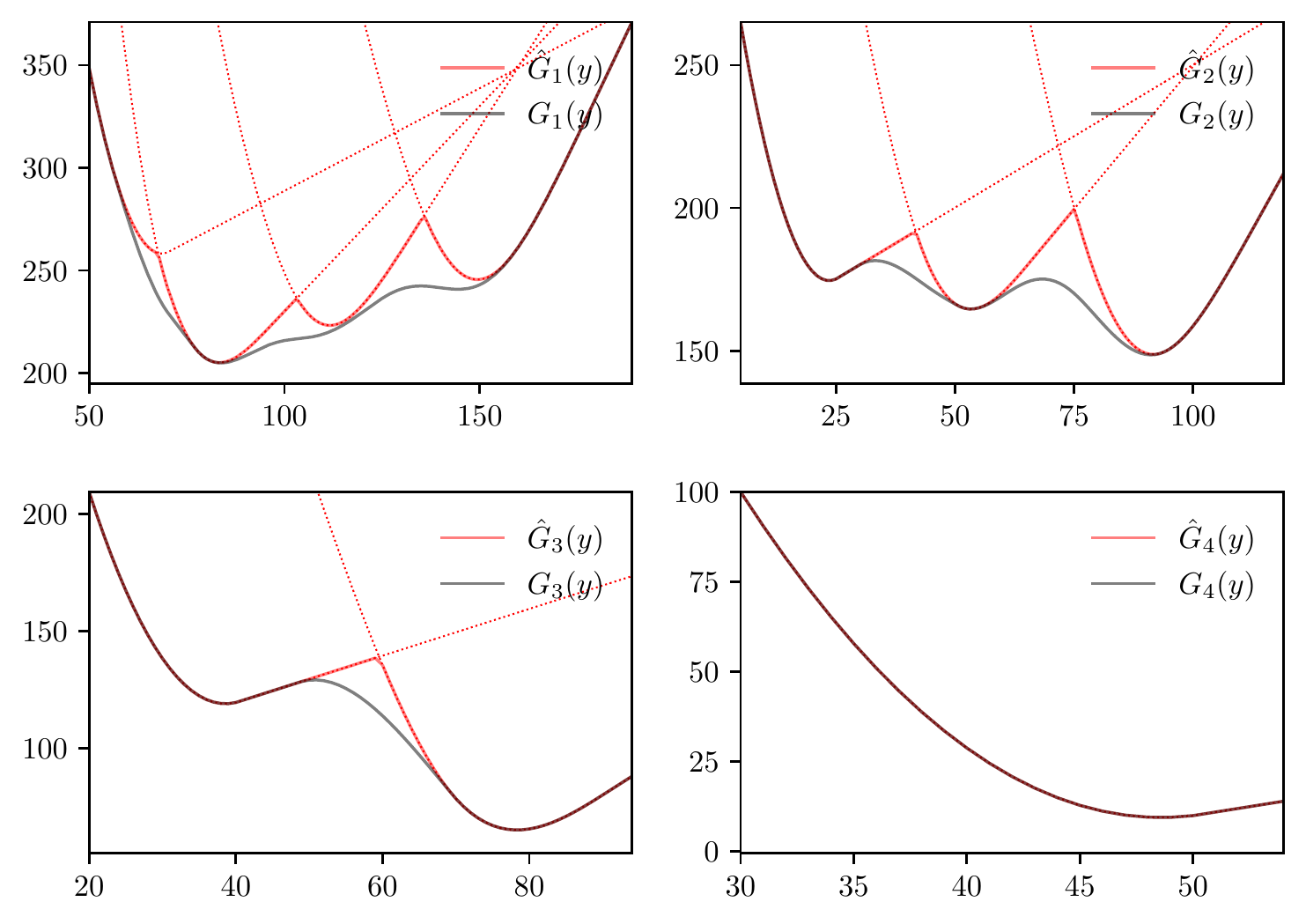}
\caption{The optimal and approximate cost functions.}
\label{fig:functions}
\end{figure}

Figure~\ref{fig:functions} illustrates the optimal cost function~$G_n$ and the approximate cost function~$\hat{G}_n$ (as well as its convex components) for each period of the problem instance. The plots are insightful. We observe that while~$\hat{G}_n$ is not a precise approximation of~$G_n$ over the entire domain, the oscillations of~$G_n$ and~$\hat{G}_n$ are strongly coupled with each other. This suggests that the local minimizers of~$\hat{G}_n$ are good approximations for the local minimizers of~$G_n$, which is critical for the performance of the heuristic approach. We also see that~$G_n$ and~$\hat{G}_n$ are the same for the last period as both are equivalent to the single-period cost function.

\begin{table}[htbp]
\vspace{0.2cm}
\centering
\begin{tabular*}{24pc}{@{\extracolsep{\fill}}crrrrrr}
\toprule
	& \multicolumn{3}{c}{Optimal} & \multicolumn{3}{c}{Heuristic}	\\
\cmidrule{2-4} \cmidrule{5-7} 
$n$	&	$s_n$ &	$S_n$ &	$G_n(S_n)$	&	$\hat{s}_n$ &	$\hat{S}_n$	& $\hat{G}_n(\hat{S}_n)$	\\
\midrule
 1 &     56 &     84 &  204.97 &     56 &     83 &  205.16 \\
 2 &      7 &     91 &  148.55 &      7 &     92 &  148.74 \\
 3 &     26 &     78 &   65.08 &     26 &     78 &   65.08 \\
 4 &     30 &     49 &    9.52 &     30 &     49 &    9.52 \\
\bottomrule
\end{tabular*}
\caption{The optimal and approximate policy parameters and expected costs.}
\label{tab:parameters}
\end{table}

Table~\ref{tab:parameters} reports the optimal and approximate policy parameters as well as the minima of the optimal and approximate cost functions. We observe that the parameters of the optimal and approximate policies are very close. If there is no initial inventory, both policies place a replenishment order in the first period. Hence, the associated cost functions suggest that the expected total costs of the optimal and approximate policies are~$K+G_1(S_1)=304.97$ and~$K+\hat{G}_1(\hat{S}_1)=305.16$. The cost figure is exact for the optimal policy. But it is not necessarily accurate for the approximate policy. Indeed, an exact numerical analysis reveals that the expected cost of the approximate policy is~$305.04$. This indicates an optimality gap smaller than~0.025\%.

\section{Computational methods}
\label{sec:computation}

The analysis carried out thus far completely characterizes the heuristic. We now discuss the implementation details of the heuristic and establish structural properties that significantly improve its computational performance. 

\subsection{Procedure}

The implementation of the heuristic proceeds with the following steps. 
\begin{enumerate}
\item The optimal inventory levels and corresponding costs are computed based on the associated multi-period cost function, see~\eqref{eq:Lna} and~\eqref{eq:yna}. This can be done by using a standard root-finding algorithm. The procedure requires the distribution functions of accumulated demands. These may be readily available in some cases. For instance, if demand follows a normal distribution in each period then the accumulated demand over any interval also follows a normal distribution. Otherwise, they should be obtained by numerical convolution. 
\item The dynamic program is solved, see~\eqref{eq:vn}. This is a single-source shortest path problem in a directed acyclic graph where periods and replenishment cycles respectively stand for vertices and edges, and the minimum costs of replenishment cycles correspond to the edge costs. The problem can be solved efficiently by means of a standard recursion. The time-complexity of such a procedure is linear in the number of replenishment cycles. The solution of the dynamic program yields a cycle length and an approximate cost figure for each period. 
\item The approximate policy parameters are computed for each period. The order-up-to levels immediately follow from the cycle lengths, see \eqref{eq:Shatn}. The re-order levels can be obtained by applying a standard root-finding procedure on each convex component of the approximate cost function, see \eqref{eq:shatn_cycle}.
\end{enumerate}

\subsection{Performance improvements}

It is clear that each step of the computational procedure scale with the number of replenishment cycles, which grows quadratically in the number of periods. In the following, we show that evaluating only a few replenishment cycles per period is sufficient to implement the heuristic.

The next proposition establishes some basic properties that are functional in the proofs of Lemma~\ref{lem:vn_simple} and Lemma~\ref{lem:shatn_cycle_simple}.

\begin{proposition} \label{prop:basics}
For any period~$n$ and consecutive cycle lengths~$a$ and~$a+1$,
\begin{enumerate}[label=(\alph*)]
\item $\varphi_{na}(y) \geq \varphi_{n,a+1}(y)$ for all~$y$,
\item $\frac{1}{a} \sum_{k=1}^a \varphi_{nk}(y) \geq \frac{1}{a+1} \sum_{k=1}^{a+1} \varphi_{nk}(y)$ for all~$y$,
\item $y_{na}\leq y_{n,a+1}$, and
\item $L_{na}'(y)\geq L_{n,a+1}'(y)$ for all $y\leq y_{na}$.
\end{enumerate}
\end{proposition}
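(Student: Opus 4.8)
The plan is to treat part~(a) as the structural engine and let (b)--(d) cascade from it. For~(a), the key observation is that demands are non-negative, so $\xi_{n,a+1}=\xi_{na}+\xi_{n+a}\geq\xi_{na}$ almost surely. Hence the event $\{\xi_{n,a+1}\leq y\}$ is contained in $\{\xi_{na}\leq y\}$ for every~$y$, and taking probabilities gives $\varphi_{n,a+1}(y)\leq\varphi_{na}(y)$. In other words, accumulating one more period's demand shifts the distribution to the right (first-order stochastic dominance), which is exactly what~(a) records. No independence is needed here, only non-negativity of $\xi_{n+a}$.

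Part~(b) then follows from~(a) by the elementary fact that the running averages of a non-increasing sequence are non-increasing. Writing $c_k=\varphi_{nk}(y)$ for fixed~$y$ and $A_a=\frac1a\sum_{k=1}^a c_k$, iterating~(a) gives $c_1\geq c_2\geq\cdots$, so $c_{a+1}\leq c_k$ for every $k\leq a$ and therefore $c_{a+1}\leq A_a$; a one-line computation $A_a-A_{a+1}=\frac{A_a-c_{a+1}}{a+1}\geq 0$ then closes~(b). Part~(c) is a super-level-set argument built on~(b): let $f_a(y)=\frac1a\sum_{k=1}^a\varphi_{nk}(y)$, which is non-decreasing in~$y$, and write $\tau=p/(h+p)$. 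Since $f_a\geq f_{a+1}$ pointwise by~(b), the set $\{y\mid f_{a+1}(y)\geq\tau\}$ is contained in $\{y\mid f_a(y)\geq\tau\}$, and the minimum over the smaller set is at least the minimum over the larger one, which is precisely $y_{na}\leq y_{n,a+1}$.

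For part~(d), I would first compute the difference of derivatives directly from $L_{na}'(y)=-ap+(h+p)\sum_{k=1}^a\varphi_{nk}(y)$, obtaining $L_{na}'(y)-L_{n,a+1}'(y)=p-(h+p)\varphi_{n,a+1}(y)$. Thus the claim is equivalent to $\varphi_{n,a+1}(y)\leq\tau$ for all $y\leq y_{na}$. Iterating~(a) gives $\varphi_{n,a+1}(y)\leq\varphi_{nk}(y)$ for every $k\leq a$, hence $\varphi_{n,a+1}(y)\leq f_a(y)$; and for $y<y_{na}$ the definition of $y_{na}$ in~\eqref{eq:yna} forces $f_a(y)<\tau$, so $\varphi_{n,a+1}(y)<\tau$ and the inequality is in fact strict.

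The main obstacle is the boundary point $y=y_{na}$ itself, where $f_a(y_{na})\geq\tau$ and the bound $\varphi_{n,a+1}\leq f_a$ no longer pins $\varphi_{n,a+1}(y_{na})$ below~$\tau$. For continuous accumulated-demand distributions this is harmless: $\varphi_{n,a+1}$ is continuous and is $<\tau$ on the whole interval $y<y_{na}$, so its value at $y_{na}$ is at most~$\tau$; equivalently the optimality condition holds with equality, $f_a(y_{na})=\tau$, giving $\varphi_{n,a+1}(y_{na})\leq f_a(y_{na})=\tau$. I would therefore carry out the argument under the continuity assumption implicit in the root-finding formulation of~\eqref{eq:yna}, and remark separately how the endpoint is handled when demand is discrete and the minimizer is defined through the $\geq$-threshold.
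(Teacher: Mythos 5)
Your proof is correct and follows essentially the same route as the paper's: (a) from non-negativity of the accumulated demands, (b) by averaging the non-increasing sequence from (a), (c) from the pointwise ordering in (b) (you argue directly via level-set inclusion where the paper argues by contradiction, but these are equivalent), and (d) from the derivative difference $p-(h+p)\varphi_{n,a+1}(y)$ combined with (a). Your treatment of the boundary point $y=y_{na}$ is in fact more careful than the paper's own proof, which silently passes from $y\leq y_{na}$ to $\frac{1}{a}\sum_{k=1}^{a}\varphi_{nk}(y)\leq \frac{p}{h+p}$ --- a step that, exactly as you note, is valid at $y=y_{na}$ itself only under continuity of the accumulated-demand distributions (or, in the discrete case, under a left-derivative convention), and which is harmless in the paper's later use of (d) since the derivative inequality is only ever integrated over an interval.
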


\begin{proof}
%\proof{Proof of Proposition~\ref{prop:basics}.}
\begin{enumerate}[label=(\alph*)]
\item Follows by definition as~$\varphi_{na}$ and~$\varphi_{n,a+1}$ are cumulative distribution functions of accumulated non-negative demands. 
\item Immediately follows from~(a).
\item Suppose $y_{na}>y_{n,a+1}$. Then, from~\eqref{eq:yna} we must have on the one hand $\frac{1}{a} \sum_{k=1}^a \varphi_{nk}(y_{n,a+1})<\frac{p}{h+p}$ and on the other hand $\frac{p}{h+p}\leq \frac{1}{a+1} \sum_{k=1}^{a+1} \varphi_{nk}(y_{n,a+1})$. This cannot hold due to~(b). 
\item It follows from~\eqref{eq:Lna} that $L'_{na}(y)-L'_{n,a+1}(y)=p-(h+p)\varphi_{n,a+1}(y)$. Because $y\leq y_{na}$, we have from~\eqref{eq:yna} that $\frac{p}{h+p}\geq \frac{1}{a} \sum_{k=1}^{a} \varphi_{nk}(y)$. Due to~(a), this implies $\frac{p}{h+p} \geq \varphi_{n,a+1}(y)$. Therefore, we have $p-(h+p)\varphi_{n,a+1}(y)\geq 0$.
\end{enumerate}
\end{proof}

\begin{lemma} \label{lem:vn_simple}
The dynamic program~\eqref{eq:vn} can be simplified as
\begin{align}\label{eq:vn_simple}
v_n = \min_{1\leq a\leq \bar{a}_n}\left\{\ell_{na} + v_{n+a}\right\}
\end{align}
where $\bar{a}_n = \max\{a\mid L_{n1}(y_{na}) \leq \ell_{n1}\}$.
\end{lemma}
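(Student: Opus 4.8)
The plan is to show that in the shortest-path recursion~\eqref{eq:vn} no cycle of length $a>\bar a_n$ is ever needed, because each such length is dominated by the unit cycle $a=1$. Since $L_{n1}(y_{n1})\le K+L_{n1}(y_{n1})=\ell_{n1}$, the length $a=1$ always satisfies the defining inequality, so $\bar a_n\ge 1$ is well defined and restricting the range to $1\le a\le\bar a_n$ keeps $a=1$ available. Concretely, I would fix a period~$n$ and a length~$a$ with $\bar a_n<a\le T-n+1$ (hence $a\ge 2$) and prove the domination inequality $\ell_{n1}+v_{n+1}\le \ell_{na}+v_{n+a}$. Feeding the DP bound $v_{n+1}\le \ell_{n+1,a-1}+v_{n+a}$, obtained from~\eqref{eq:vn} by selecting cycle length $a-1$ at period $n+1$, into the left-hand side reduces the whole claim to the single ``cycle-splitting'' inequality $\ell_{n1}+\ell_{n+1,a-1}\le \ell_{na}$, i.e. that breaking the first period off a long cycle does not raise cost.

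To establish the splitting inequality I would work at the common point $y_{na}$ and decompose the cycle cost as $L_{na}(y_{na})=\bigl(h\E(y_{na}-\xi_{n1})^++p\E(y_{na}-\xi_{n1})^-\bigr)+\sum_{k=2}^a\bigl(h\E(y_{na}-\xi_{nk})^++p\E(y_{na}-\xi_{nk})^-\bigr)$. The first term is exactly $L_{n1}(y_{na})$, and because $a>\bar a_n$ the definition of $\bar a_n$ yields $L_{n1}(y_{na})>\ell_{n1}=K+L_{n1}(y_{n1})$; this is where the extra fixed cost $K$ is recovered. For the remaining sum I would use $\xi_{nk}=\xi_n+\xi_{n+1,k-1}$ together with the independence of $\xi_n$ from the later demands to rewrite it as the expectation $\E_{\xi_n}\!\left[L_{n+1,a-1}(y_{na}-\xi_n)\right]$, and then bound it below by $L_{n+1,a-1}(y_{n+1,a-1})$ since $y_{n+1,a-1}$ minimizes $L_{n+1,a-1}$. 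Adding the two bounds gives $L_{na}(y_{na})>K+L_{n1}(y_{n1})+L_{n+1,a-1}(y_{n+1,a-1})$, which is precisely $\ell_{na}>\ell_{n1}+\ell_{n+1,a-1}$.

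Combining the pieces gives $\ell_{n1}+v_{n+1}\le \ell_{n1}+\ell_{n+1,a-1}+v_{n+a}<\ell_{na}+v_{n+a}$ for every $a>\bar a_n$, so the minimizer of~\eqref{eq:vn} can always be taken in $\{1,\dots,\bar a_n\}$ and the two recursions return the same $v_n$. The step I expect to be the crux is the identity $\sum_{k=2}^a(\cdots)=\E_{\xi_n}[L_{n+1,a-1}(y_{na}-\xi_n)]$: recognizing that the tail of the long-cycle cost evaluated at $y_{na}$ is an averaged copy of the shorter-cycle cost shifted by the first-period demand is what allows the optimality of $y_{n+1,a-1}$ to be invoked, and it is the only place where independence of demands enters. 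Everything else is bookkeeping, with Proposition~\ref{prop:basics}(c) available if one prefers to argue in addition that $L_{n1}(y_{na})$ is nondecreasing in~$a$, so that $\bar a_n$ is a genuine threshold.
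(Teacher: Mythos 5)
Your proof is correct and follows essentially the same route as the paper's: both arguments dominate a long cycle by splitting off its first period, both rest on the decomposition $\ell_{na}=K+L_{n1}(y_{na})+\E L_{n+1,a-1}(y_{na}-\xi_{n1})$ with the minimizer bound $K+L_{n+1,a-1}(y)\geq \ell_{n+1,a-1}$, and both invoke $L_{n1}(y_{na})>\ell_{n1}$ for $a>\bar{a}_n$ to recover the extra fixed cost. The only cosmetic differences are that you make the replacement argument explicit through the DP inequality $v_{n+1}\leq \ell_{n+1,a-1}+v_{n+a}$ (the paper argues this verbally), and you correctly observe that the monotonicity step $L_{n1}(y_{na})\leq L_{n1}(y_{n,a+1})$ via Proposition~\ref{prop:basics}(c), which the paper includes to make $\bar{a}_n$ a genuine threshold, is not strictly needed for the lemma since the definition of the maximum already gives $L_{n1}(y_{na})>\ell_{n1}$ for all $a>\bar{a}_n$.
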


\begin{proof}
%\proof{Proof of Lemma~\ref{lem:vn_simple}.}
It is clear that a replenishment cycle that starts in period~$n$ and spans the next~$a$ periods cannot be a part of an optimal replenishment plan if $\ell_{na}$ exceeds the sum of $\ell_{n1}$ and $\ell_{n+1,a-1}$, as it can then be replaced with two cycles such that the first starts and ends in period~$n$ and the second starts in~period~$n+1$ and ends in period~$n+a$. This condition can be formalized as
\begin{align*}
\ell_{na} & > \ell_{n1} + \ell_{n+1,a-1} \\
%\ell_{na} - \ell_{n+1,a-1} & > \ell_{n1} \\
%K + L_{na}(y_{na}) - \ell_{n+1,a-1} & > \ell_{n1} \\
K + L_{n1}(y_{na}) + \E L_{n+1,a-1}(y_{na}-\xi_{n1}) - \ell_{n+1,a-1} & > \ell_{n1} \\
K + \E L_{n+1,a-1}(y_{na}-\xi_{n1}) - \ell_{n+1,a-1} & > \ell_{n1} - L_{n1}(y_{na})
\end{align*}
where $\ell_{na}=K+L_{na}(y_{na})=K+L_{n1}(y_{na})+\E L_{n+1,a-1}(y_{na}-\xi_{n1})$ follows from~\eqref{eq:Lna}. Because~$\ell_{n+1,a-1}$ is the minimum of $K+L_{n+1,a-1}(y)$, we have $K+L_{n+1,a-1}(y_{na}-\xi_{n1})\geq \ell_{n+1,a-1}$ for any realization of~$\xi_{n1}$. Hence, the left-hand-side of the last inequality is non-negative. This suggests that the condition holds if the right-hand-side is negative, which is indeed the case if $L_{n1}(y_{na})>\ell_{n1}$. The proof is completed if we show that $L_{n1}(y_{na})\leq L_{n1}(y_{n,a+1})$. This follows from Proposition~\ref{prop:basics}(c) and the convexity of~$L_{n1}$. 
\end{proof}

\begin{lemma}\label{lem:shatn_cycle_simple}
The expression of the approximate re-order level in~\eqref{eq:shatn_cycle} can be simplified as
\begin{align}\label{eq:shatn_cycle_simple}
\hat{s}_n = \min_{1\leq a\leq a_n}\min\{y\mid L_{na}(y) + v_{n+a} \leq v_n\}.
\end{align}
\end{lemma}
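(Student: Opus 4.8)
My plan is to peel the outer minimum in~\eqref{eq:shatn_cycle} apart cycle by cycle and show that no replenishment cycle longer than the optimal length~$a_n$ can push the crossing point further to the left than the cycles with $a\le a_n$ already do. Write $\hat{s}_{na}=\min\{y\mid L_{na}(y)+v_{n+a}\le v_n\}$ for the contribution of cycle~$a$, so that $\hat{s}_n=\min_{1\le a\le T-n+1}\hat{s}_{na}$. Since each $L_{na}$ is convex with minimizer~$y_{na}$, the quantity $\hat{s}_{na}$ is just the leftmost point on the decreasing branch of $L_{na}$ at which $L_{na}(y)=v_n-v_{n+a}$ (and $\hat{s}_{na}=+\infty$ when this level falls below the minimum of $L_{na}$, in which case the cycle is harmless). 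Because $a_n$ already belongs to the restricted range, it suffices to prove $\hat{s}_{na}\ge\hat{s}_{na_n}$ for every $a>a_n$; the full outer minimum then collapses to the one over $a\le a_n$.

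The first step is to bound each $\hat{s}_{na}$ below by a purely single-cycle quantity. Let $z_{na}=\min\{y\mid L_{na}(y)\le L_{na}(y_{na})+K\}$ denote the left point at which $L_{na}$ exceeds its minimum by exactly~$K$. Optimality of the shortest path~\eqref{eq:vn} gives $v_n\le\ell_{na}+v_{n+a}$, i.e. $v_n-v_{n+a}\le\ell_{na}=L_{na}(y_{na})+K$, so the sublevel set defining $\hat{s}_{na}$ is contained in the one defining $z_{na}$ and hence $\hat{s}_{na}\ge z_{na}$. For the optimal cycle this is tight: since $v_n=\ell_{na_n}+v_{n+a_n}$, the level $v_n-v_{n+a_n}$ equals $L_{na_n}(y_{na_n})+K$ exactly, whence $\hat{s}_{na_n}=z_{na_n}$.

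The crux is to show that $z_{na}$ is nondecreasing in~$a$, and this is the only step using the detailed \emph{shape} of the $L_{na}$ rather than merely their values; I expect it to be the main obstacle. Here Proposition~\ref{prop:basics} does the work: part~(c) gives $y_{na}\le y_{n,a+1}$, and part~(d) gives $-L_{n,a+1}'(y)\ge-L_{na}'(y)\ge0$ on the branch $y\le y_{na}$, so the left branch of $L_{n,a+1}$ is the steeper one. Evaluating $L_{n,a+1}$ at $z_{na}$ and peeling off the nonnegative tail over $[y_{na},y_{n,a+1}]$ I would write
\[ L_{n,a+1}(z_{na}) - L_{n,a+1}(y_{n,a+1}) = \int_{z_{na}}^{y_{n,a+1}} \bigl(-L_{n,a+1}'(t)\bigr)\,dt \ge \int_{z_{na}}^{y_{na}} \bigl(-L_{n,a+1}'(t)\bigr)\,dt \ge \int_{z_{na}}^{y_{na}} \bigl(-L_{na}'(t)\bigr)\,dt = K, \]
where the first inequality drops the nonnegative integral over $[y_{na},y_{n,a+1}]$, the second is Proposition~\ref{prop:basics}(d), and the final equality is the definition of $z_{na}$. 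Thus at $z_{na}$ the function $L_{n,a+1}$ already overshoots its minimum by at least~$K$; since it is decreasing to the left of~$y_{n,a+1}$, its own left-$K$-crossing satisfies $z_{n,a+1}\ge z_{na}$. For discrete demand, where $L_{na}$ is piecewise linear, the same chain runs verbatim with one-sided slopes (equivalently, by comparing increments of $L_{na}$ directly).

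Chaining the two facts closes the argument: for any $a>a_n$, monotonicity of~$z$ gives $\hat{s}_{na}\ge z_{na}\ge z_{na_n}=\hat{s}_{na_n}$, so cycles longer than~$a_n$ never improve on~$a_n$. Hence $\min_{a>a_n}\hat{s}_{na}\ge\hat{s}_{na_n}\ge\min_{1\le a\le a_n}\hat{s}_{na}$, and the minimum over all cycle lengths coincides with the minimum over $a\le a_n$, which is exactly~\eqref{eq:shatn_cycle_simple}.
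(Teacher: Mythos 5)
Your proof is correct, but it is organized around a different decomposition than the paper's. The paper argues directly on the shifted components $f_{na}(y)=L_{na}(y)+v_{n+a}$: for $a\ge a_n$, integrating the slope dominance of Proposition~\ref{prop:basics}(d) over $[\hat{s}_{na},y_{na_n}]$ and using $f_{na_n}(y_{na_n})=v_n-K\le f_{na}(y_{na_n})$ (optimality of $a_n$ in~\eqref{eq:vn}) gives $f_{na_n}(\hat{s}_{na})\le f_{na}(\hat{s}_{na})\le v_n$, hence $\hat{s}_{na_n}\le\hat{s}_{na}$. You instead factor the argument through the single-cycle $K$-crossings $z_{na}$, splitting the claim into (i) $\hat{s}_{na}\ge z_{na}$ for every $a$, with equality at $a=a_n$ --- the only place the shortest-path values enter --- and (ii) monotonicity of $z_{na}$ in $a$, a statement about the functions $L_{na}$ alone, proved for consecutive pairs and then chained. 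Both proofs hinge on the same integrated form of Proposition~\ref{prop:basics}(d), so the ingredients coincide; what your factoring buys is a more transparent treatment of non-consecutive cycle lengths (the paper's appeal to part~(d) for the pair $(a_n,a)$ with $a>a_n+1$ tacitly chains through intermediate lengths), plus a standalone, reusable monotonicity fact about the $K$-crossings, while the paper's direct comparison is shorter and needs no auxiliary quantity. One point worth making explicit in yours: concluding $z_{n,a+1}\ge z_{na}$ from $L_{n,a+1}(z_{na})\ge L_{n,a+1}(y_{n,a+1})+K$ uses that a convex function cannot be constant on an interval at a value strictly above its minimum (since $K>0$), which is what rules out a crossing point strictly to the left of $z_{na}$.
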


\begin{proof}
%\proof{Proof of Lemma~\ref{lem:shatn_cycle_simple}.}
For the sake of brevity, let us define $f_{na}(y)=L_{na}(y)+v_{n+a}$ and $\hat{s}_{na}=\min\{y\mid f_{na}(y)\leq v_n\}$. Then we can re-write~\eqref{eq:shatn_cycle} as $\hat{s}_n=\min_{1\leq a\leq T-n+1} \hat{s}_{na}$. Note that~$\hat{s}_{na}$ may not exist for some~$a$, as the minimum~$f_{na}(y_{na})$ could be larger than~$v_n$. We omit these in the following. The proof follows if $\hat{s}_{na_n}\leq \hat{s}_{na}$ holds for any~$a$ such that $a\geq a_n$. Suppose~$a\geq a_n$. Because $f_{na_n}(y)$ and $f_{na}(y)$ are convex and $y_{na_n}\leq y_{na}$ due to Proposition~\ref{prop:basics}(c), both functions are decreasing in the interval $y\leq y_{na_n}$. From Proposition~\ref{prop:basics}(d), we have that $f_{na_n}'(y)\geq f_{na}'(y)$ for all $y\leq y_{na}$. This suggests that $f_{na_n}(y)$ decreases less than $f_{na}(y)$ over the interval $\hat{s}_{na}\leq y\leq y_{na_n}$. Thus we have $f_{na_n}(\hat{s}_{na})-f_{na_n}(y_{na_n})\leq f_{na}(\hat{s}_{na})-f_{na}(y_{na_n})$. It follows from the definition of~$a_n$ that $f_{na_n}(y_{na_n})\leq f_{na}(y_{na_n})$. Hence, we have $f_{na_n}(\hat{s}_{na})\leq f_{na}(\hat{s}_{na})$. This implies $\hat{s}_{na_n}\leq \hat{s}_{na}$. 
\end{proof}

The results presented in Lemma~\ref{lem:vn_simple} and Lemma~\ref{lem:shatn_cycle_simple} have important ramifications on the computational performance of the heuristic. Lemma~\ref{lem:vn_simple} defines a set of replenishment cycles for each period by setting an upper bound on the cycle length, and shows that an optimal replenishment plan is a collection of these replenishment cycles. This drastically reduces the number of multi-period cost functions to be evaluated and (if necessary) the distribution functions of accumulated demands to be computed. Besides, it leads to a direct reduction in the computational time associated with the dynamic program. In a similar vein, Lemma~\ref{lem:shatn_cycle_simple} shows that the approximate re-order level can be obtained by evaluating only a limited number of convex components of the approximate cost function.

\section{Numerical Study} 
\label{sec:numerical}

In this section, we conduct a numerical study and compare the new heuristic against the earlier heuristics by \cite{Askin1981} and \cite{Bollapragada1999}. In the following, we first present the numerical design and then discuss our findings.

\subsection{Design} 

In our numerical study, conduct a full factorial analysis involving a large variety of problem instances which differ with respect to their cost parameters and demand specifications. The following values are used for cost parameters; holding cost $h=1$, penalty costs $p=5,10,20$, and fixed replenishment costs $K=800,3200,12800$. The fixed replenishment costs are chosen such that they correspond to cycle lengths of~$4$,~$8$ and~$16$ periods in the economic order quantity problem with a demand of~$100$ units per period. We reflect on time-varying demands by means of different demand patterns which define how average demand progresses over the planning horizon. We adopt demand patterns from the empirical data reported by \cite{Kurawarwala1996}. These involve monthly demands for four PC products over their life-cycles and demonstrate the combined effect of product life-cycles and seasonal fluctuations. We slightly adapt the original data for convenience. We first transform the data into a weekly scale by means of polynomial interpolation, while keeping the length of the product life-cycle intact. This leads to planning horizons varying around 70--120 periods. Then, we scale demands such that the average demand over the entire planning horizon is~$100$ units. Figure~\ref{fig:patterns} graphically illustrates all four demand patterns---referred to as $\mu=\text{Emp1, Emp2, Emp3, Emp4}$. We parametrize demand uncertainty by a fixed coefficient of variation over the planning horizon. This enables us to investigate the performance of heuristics with respect to varying levels of demand uncertainty in a unified fashion. We use normal and negative binomial distributions to model demands as these distributions can be characterized by their mean and coefficient of variation independently. We shall see that demand uncertainty has a major impact on the heuristic performance. To better investigate this impact, we consider two classes of instances. The first reflects on moderate levels of demand uncertainty. It involves normally distributed demands with coefficients of variation $\rho=0.10,0.20,0.30$. We discretize and truncate normal distributions with a unit stepsize between zero and twice their mean, as the optimal policy can only be computed exactly for discrete demands. The second demonstrate high levels of demand uncertainty. It involves negative binomially distributed demands with coefficients of variation $\rho=0.50,0.75,1.00$.

\begin{figure}[tbp]
\centering
\includegraphics[width=\textwidth]{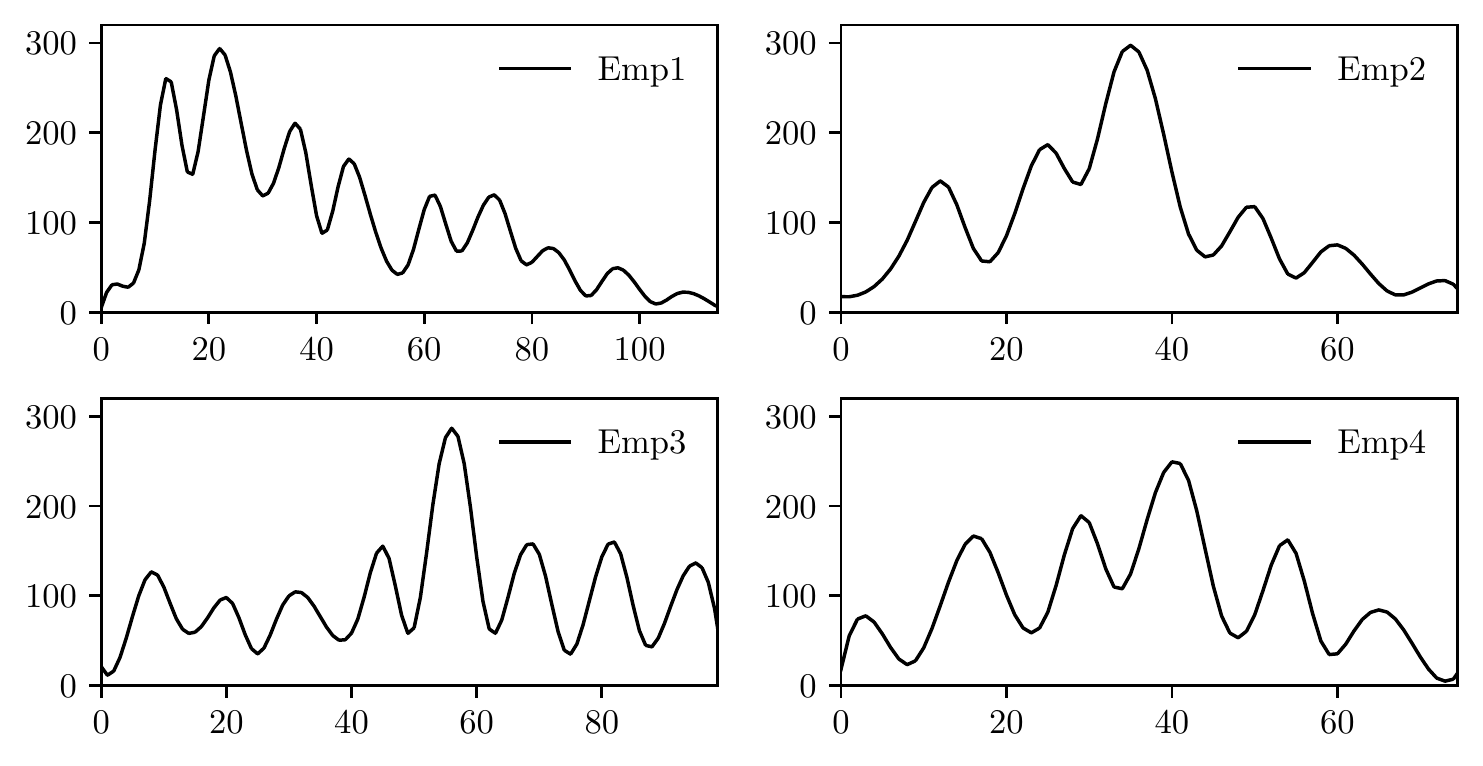}
\caption{Demand patterns. \label{fig:patterns}}
\end{figure}

Our numerical design leads to a test bed of~$216$ problem instances. For each instance, we compare the expected cost of employing the optimal policy against those of the new heuristic as well the earlier heuristics by \citet{Askin1981} and \citet{Bollapragada1999}. We refer the reader to the associated papers for the details of the benchmark heuristics. \citeauthor{Bollapragada1999}'s \citeyearpar{Bollapragada1999} heuristic cannot account for end-of-horizon effects as it implicitly assumes an infinite planning horizon. To alleviate this drawback and ensure a fair comparison, we replace the parameters of all heuristics with optimal policy parameters for the last~$18$ periods of the planning horizon---as in \citet{Bollapragada1999}. We do not present a detailed discussion on the computational times as all heuristics scale very well. The run times for each of the three heuristics are in the order of seconds for the test problems considered.

%We compute the expected cost of each heuristic by simulating associated policies $10^6$ times with common random number simulation. 

\subsection{Results}

\begin{table}[tbp]
\centering
\begin{tabular*}{\textwidth}{@{\extracolsep{\fill}}lrrrrrrr}
\toprule
          &        & \multicolumn{2}{c}{AH} & \multicolumn{2}{c}{BMH} & \multicolumn{2}{c}{NH} \\
\cmidrule{3-4} \cmidrule{5-6} \cmidrule{7-8}
Parameter &  Value &     Avg. & Max. &      Avg. &  Max.  &     Avg. &  Max. \\
\midrule
\multirow{3}{*}{\minitab[l]{$K$}}
     &    800 &         2.52 &  3.90 &          3.42 &   5.46 &         0.33 &  0.75 \\
     &   3200 &         2.41 &  4.35 &          5.60 &   8.05 &         0.23 &  0.79 \\
     &  12800 &         1.79 &  4.16 &          6.88 &  13.19 &         0.08 &  0.37 \\
\midrule
\multirow{3}{*}{\minitab[l]{$p$}}
     &      5 &         2.65 &  4.16 &          4.45 &   8.05 &         0.13 &  0.50 \\
     &     10 &         1.99 &  3.40 &          5.29 &  11.82 &         0.20 &  0.75 \\
     &     20 &         2.08 &  4.35 &          6.16 &  13.19 &         0.31 &  0.79 \\
\midrule
\multirow{4}{*}{\minitab[l]{$\mu$}}
     &   Emp1 &         2.47 &  3.81 &          4.73 &   8.27 &         0.19 &  0.63 \\
     &   Emp2 &         2.07 &  4.35 &          4.78 &  10.42 &         0.20 &  0.73 \\
     &   Emp3 &         2.50 &  4.16 &          5.28 &   8.05 &         0.24 &  0.73 \\
     &   Emp4 &         1.92 &  3.90 &          6.40 &  13.19 &         0.23 &  0.79 \\
\midrule
\multirow{3}{*}{\minitab[l]{$\rho$}}
     &   0.10 &         2.05 &  4.16 &          5.87 &  13.19 &         0.05 &  0.22 \\
     &   0.20 &         2.25 &  3.85 &          5.36 &  11.73 &         0.21 &  0.62 \\
     &   0.30 &         2.42 &  4.35 &          4.66 &  10.27 &         0.38 &  0.79 \\
\midrule
\multicolumn{2}{l}{\multirow{1}{*}{All instances}} 
              &         2.24 &  4.35 &          5.30 &  13.19 &         0.21 &  0.79 \\
\bottomrule
\end{tabular*}
\caption{Results on instances with moderate demand uncertainty.\label{tab:res_moderate}}
\end{table}

\begin{table}[h!]
\centering
\begin{tabular*}{\textwidth}{@{\extracolsep{\fill}}lrrrrrrr}
\toprule
          &        & \multicolumn{2}{c}{AH} & \multicolumn{2}{c}{BMH} & \multicolumn{2}{c}{NH} \\
\cmidrule{3-4} \cmidrule{5-6} \cmidrule{7-8}
Parameter &  Value &     Avg. & Max. &      Avg. &  Max.  &     Avg. &  Max. \\
\midrule
\multirow{3}{*}{\minitab[l]{$K$}}
     &    800 &         4.68 &   7.94 &          2.51 &  3.92 &         1.07 &  1.71 \\
     &   3200 &         4.34 &   9.55 &          3.10 &  5.12 &         1.52 &  2.27 \\
     &  12800 &         4.60 &  15.73 &          2.96 &  7.29 &         1.16 &  2.64 \\
\midrule
\multirow{3}{*}{\minitab[l]{$p$}}
     &      5 &         4.12 &  15.73 &          1.98 &  3.94 &         0.95 &  2.05 \\
     &     10 &         4.34 &   9.35 &          2.68 &  5.45 &         1.24 &  2.25 \\
     &     20 &         5.16 &  13.98 &          3.90 &  7.29 &         1.56 &  2.64 \\
\midrule
\multirow{4}{*}{\minitab[l]{$\mu$}}
     &   Emp1 &         4.61 &   8.85 &          2.70 &  5.10 &         1.20 &  2.18 \\
     &   Emp2 &         6.02 &  15.73 &          2.64 &  5.78 &         1.20 &  2.64 \\
     &   Emp3 &         3.54 &   6.37 &          2.94 &  5.12 &         1.30 &  2.27 \\
     &   Emp4 &         3.99 &   7.94 &          3.14 &  7.29 &         1.30 &  2.45 \\
\midrule
\multirow{3}{*}{\minitab[l]{$\rho$}}
     &   0.50 &         3.20 &   5.76 &          3.54 &  7.29 &         0.85 &  2.27 \\
     &   0.75 &         4.63 &  12.66 &          2.73 &  4.58 &         1.27 &  2.64 \\
     &   1.00 &         5.79 &  15.73 &          2.30 &  4.12 &         1.63 &  2.56 \\
\midrule
\multicolumn{2}{l}{\multirow{1}{*}{All instances}} 
              &         4.54 &  15.73 &          2.86 &  7.29 &         1.25 &  2.64 \\
\bottomrule
\end{tabular*}
\caption{Results on instances with high demand uncertainty.\label{tab:res_high}}
\end{table}

We provide a summary of our results on instances with moderate and high demand uncertainty in Table~\ref{tab:res_moderate} and Table~\ref{tab:res_high}. These report average and maximum (percentage) optimality gaps of each heuristic for all problem instances characterized by the same pivot parameter. We abbreviate the new heuristic, \citeauthor{Askin1981}'s heuristic and \citeauthor{Bollapragada1999}'s heuristic as NH, AH, and BMH, respectively. 

The results clearly show that NH is very effective. It averages an optimality gap of~$0.21\%$ for instances with moderate demand uncertainty and~$1.25\%$ for instances with high demand uncertainty. NH significantly outperforms the benchmark heuristics. Its maximum optimality gaps are~$0.79\%$ and~$2.64\%$ for instances with moderate and high demand uncertainty. These figures are even smaller than the average optimality gaps of the benchmark heuristics---$2.24\%$ and~$4.54\%$ for AH and~$5.30\%$ and~$2.86\%$ for BMH. The performance of NH is consistent across different problem settings. It is yet affected by cost parameters and demand uncertainty. The fixed replenishment has a minor impact. NH's performance change only slightly when the fixed replenishment cost increases more than tenfold. The impact of penalty cost is more visible. NH's optimality gap tends to increase with penalty costs. These findings mostly hold for AH and BMH as well as NH. We observe that demand uncertainty is a major determinant of heuristic performance. This becomes particularly apparent when we compare the results on instances with moderate and high demand uncertainty. NH's performance deteriorates with the extent of uncertainty. Nevertheless, its optimality gap is much lower than AH and BMH even for instances with high demand uncertainty.

\section{A stationary analysis of the heuristic} 
\label{sec:stationary}

To provide a better understanding of the proposed heuristic and draw analogies with some well-established results in the literature, we now discuss a stationary adaptation of our heuristic. We consider the infinite-horizon inventory problem with stationary demands focusing on average cost optimality. The optimal control policy for this problem is known to be a stationary~$(s,S)$ policy \citep[see e.g.][]{Iglehart1963a,Zheng1991}.

The adaptation proceeds as follows. The multi-period cost function is now period-independent and can be expressed as $L_a(y)=\sum_{k=1}^a (h\E(y-\xi_{(k)})^+ + p\E(y-\xi_{(k)})^-)$ where~$\xi_{(k)}$ is the $k$-fold convolution of the demand distribution. Let $y_a$ be the minimizer of~$L_a$. Then, we can write the optimal total cost over a replenishment cycle that spans over~$a$ periods as $\ell_a=K+L_a(y_a)$. Because demand is stationary and the objective is to minimize the long-term average cost, we concentrate on the average cost per period. Hence, the dynamic program reduces to choosing the length of a replenishment cycle as $v=\min_a \left\{\ell_a/a\right\}$ such that~$v$ is the average cost per period. Let~$a_*$ be the optimal cycle length. Then we can specify approximate policy parameters as $\hat{S}=y_{a_*}$ and $\hat{s}=\min\{y\mid L_1(y)\leq v\}$. Let us now make the following observations. First, the procedure we employ to find the optimal cycle length and the associated order-up-to level is the same procedure that is used to compute the optimal periodic-review order-up-to policy \citep[see e.g.][]{Rao2003,Liu2012}. Secondly, the condition we use to compute the re-order level is equivalent to \citeauthor{Zheng1991a}'s \citeyearpar{Zheng1991a} characterization of the optimal re-order level, except we replace the optimal long-run average cost with that of the periodic-review order-up-to policy. These provide an intuitive explanation of the conceptual ideas behind our heuristic. We use a sub-optimal but easily computable policy as a proxy to approximate the optimal cost function. Then, we compute policy parameters heuristically following the characterization of the optimal policy parameters, where we use the approximate cost function instead of the optimal one. 

The stationary version of our heuristic as we described above has already been considered in the literature. \citet{Porteus1979} introduced this heuristic. Then, \citet{Freeland1980} numerically investigated its cost-effectiveness and reported favourable results. Interestingly, \citet{Porteus1979} mentioned that the heuristic can be applied to non-stationary problems. However, such an application has been absent to date.

\section{Conclusion}
\label{sec:conc}

It has been pointed by many authors that most practical demand patterns fall into the category of non-stationary stochastic demands. But it is also widely accepted that mathematical models associated with non-stationary stochastic demands are very complicated in terms computational needs and user understanding \citep[see e.g.][]{Silver1981,Silver2008}. In this study, we presented a hybrid---myopic and far-sighted---heuristic which can be viewed as an attempt to bridge this gap between theory and practice. Our numerical findings show that the proposed heuristic significantly outperforms earlier heuristics and yields almost-optimal results for a variety of demand patterns and cost parameters. An important practical benefit of the heuristic is that it is easy-to-understand and -use as it builds on readily available methods of shortest paths and convex minimization. 

\bibliographystyle{chicago} 
\bibliography{ssheur}

\begin{thebibliography}{}

\bibitem[\protect\citeauthoryear{Archibald and Silver}{Archibald and
  Silver}{1978}]{Archibald1978}
Archibald, B.~C. and E.~A. Silver (1978).
\newblock (${s,S}$) policies under continuous review and discrete compound
  poisson demand.
\newblock {\em Management Science\/}~{\em 24\/}(9), 899--909.

\bibitem[\protect\citeauthoryear{Askin}{Askin}{1981}]{Askin1981}
Askin, R. (1981).
\newblock A procedure for production lot sizing with probabilistic dynamic
  demand.
\newblock {\em IIE Transactions\/}~{\em 13\/}(2), 132--137.

\bibitem[\protect\citeauthoryear{Bollapragada and Morton}{Bollapragada and
  Morton}{1999}]{Bollapragada1999}
Bollapragada, S. and T.~Morton (1999).
\newblock A simple heuristic for computing non-stationary (${s,S}$) policies.
\newblock {\em Operations Research\/}~{\em 47\/}(4), 576--584.

\bibitem[\protect\citeauthoryear{Chopra and Meindl}{Chopra and
  Meindl}{2007}]{Chopra2007}
Chopra, S. and P.~Meindl (2007).
\newblock {\em Supply Chain Management. Strategy, Planning \& Operation\/} (3rd
  ed.).
\newblock Prentice Hall.

\bibitem[\protect\citeauthoryear{Ehrhardt}{Ehrhardt}{1979}]{Ehrhardt1979}
Ehrhardt, R. (1979).
\newblock Power approximation for computing (${s,S}$) inventory policies.
\newblock {\em Management Science\/}~{\em 25\/}(8), 777--786.

\bibitem[\protect\citeauthoryear{Federgruen and Zipkin}{Federgruen and
  Zipkin}{1984}]{Federgruen1984}
Federgruen, A. and P.~Zipkin (1984).
\newblock An efficient algorithm for computing optimal (${s, S}$) policies.
\newblock {\em Operations Research\/}~{\em 32\/}(6), 1268--1285.

\bibitem[\protect\citeauthoryear{Feng and Xiao}{Feng and Xiao}{2000}]{Feng2000}
Feng, Y. and B.~Xiao (2000).
\newblock A new algorithm for computing optimal $({s, S})$ policies in a
  stochastic single item/location inventory system.
\newblock {\em IIE Transactions\/}~{\em 32\/}(11), 1081--1090.

\bibitem[\protect\citeauthoryear{Freeland and Porteus}{Freeland and
  Porteus}{1980}]{Freeland1980}
Freeland, J.~R. and E.~L. Porteus (1980).
\newblock Evaluating the effectiveness of a new method for computing
  approximately optimal {$(s,S)$} inventory policies.
\newblock {\em Operations Research\/}~{\em 28\/}(2), 353--364.

\bibitem[\protect\citeauthoryear{Graves and Willems}{Graves and
  Willems}{2008}]{Graves2008}
Graves, S.~C. and S.~P. Willems (2008).
\newblock Strategic inventory placement in supply chains: {N}onstationary
  demand.
\newblock {\em Manufacturing \& Service Operations Management\/}~{\em 10\/}(2),
  278--287.

\bibitem[\protect\citeauthoryear{Guan and Miller}{Guan and
  Miller}{2008}]{Guan2008}
Guan, Y. and A.~J. Miller (2008).
\newblock Polynomial-time algorithms for stochastic uncapacitated lot-sizing
  problems.
\newblock {\em Operations Research\/}~{\em 56\/}(5), 1172--1183.

\bibitem[\protect\citeauthoryear{Iglehart}{Iglehart}{1963a}]{Iglehart1963a}
Iglehart, D.~L. (1963a).
\newblock Dynamic programming and stationary analysis of inventory problems.
\newblock In H.~Scarf, D.~Gilford, and M.~Shelly (Eds.), {\em Multistage
  inventory models and techniques}, pp.\  1--31. Stanford, CA: Stanford
  University Press.

\bibitem[\protect\citeauthoryear{Iglehart}{Iglehart}{1963b}]{Iglehart1963}
Iglehart, D.~L. (1963b).
\newblock Optimality of (${s, S}$) policies in the infinite horizon dynamic
  inventory problem.
\newblock {\em Management Science\/}~{\em 9\/}(2), 259--267.

\bibitem[\protect\citeauthoryear{Johnson}{Johnson}{1968}]{Johnson1968}
Johnson, E.~L. (1968).
\newblock On ${(s, S)}$ policies.
\newblock {\em Management Science\/}~{\em 15\/}(1), 80--101.

\bibitem[\protect\citeauthoryear{Karlin}{Karlin}{1960}]{Karlin1960}
Karlin, S. (1960).
\newblock Dynamic inventory policy with varying stochastic demands.
\newblock {\em Management Science\/}~{\em 6\/}(3), 231 -- 258.

\bibitem[\protect\citeauthoryear{Kurawarwala and Matsuo}{Kurawarwala and
  Matsuo}{1996}]{Kurawarwala1996}
Kurawarwala, A.~A. and H.~Matsuo (1996).
\newblock Forecasting and inventory management of short life-cycle products.
\newblock {\em Operations Research\/}~{\em 44\/}(1), 131--150.

\bibitem[\protect\citeauthoryear{Liu and Song}{Liu and Song}{2012}]{Liu2012}
Liu, F. and J.-S. Song (2012).
\newblock Good and bad news about the {$(S,T)$} policy.
\newblock {\em Manufacturing \& Service Operations Management\/}~{\em 14\/}(1),
  42--49.

\bibitem[\protect\citeauthoryear{Lulli and Sen}{Lulli and
  Sen}{2004}]{Lulli2004}
Lulli, G. and S.~Sen (2004).
\newblock A branch-and-price algorithm for multistage stochastic integer
  programming with application to stochastic batch-sizing problems.
\newblock {\em Management Science\/}~{\em 50\/}(6), 786.

\bibitem[\protect\citeauthoryear{Neale and Willems}{Neale and
  Willems}{2009}]{Neale2009}
Neale, J.~J. and S.~P. Willems (2009).
\newblock Managing inventory in supply chains with nonstationary demand.
\newblock {\em Interfaces\/}~{\em 39\/}(5), 388--399.

\bibitem[\protect\citeauthoryear{Porteus}{Porteus}{1979}]{Porteus1979}
Porteus, E.~L. (1979).
\newblock An adjustment to the {Norman-White} approach to approximating dynamic
  programs.
\newblock {\em Operations Research\/}~{\em 27\/}(6), 1203--1208.

\bibitem[\protect\citeauthoryear{Rao}{Rao}{2003}]{Rao2003}
Rao, U.~S. (2003).
\newblock Properties of the periodic review {$(R,T)$} inventory control policy
  for stationary, stochastic demand.
\newblock {\em Manufacturing \& Service Operations Management\/}~{\em 5\/}(1),
  37--53.

\bibitem[\protect\citeauthoryear{Scarf}{Scarf}{1960}]{Scarf1960}
Scarf, H.~E. (1960).
\newblock {O}ptimality of (${S,s}$) policies in the dynamic inventory problem.
\newblock In K.~J. Arrow, S.~Karlin, and P.~Suppes (Eds.), {\em Mathematical
  Methods in the Social Sciences}, pp.\  196--202. Stanford, CA: Stanford
  University Press.

\bibitem[\protect\citeauthoryear{Silver}{Silver}{1978}]{Silver1978}
Silver, E.~A. (1978).
\newblock Inventory control under a probabilistic time-varying, demand pattern.
\newblock {\em IIE Transactions\/}~{\em 10\/}(4), 371--379.

\bibitem[\protect\citeauthoryear{Silver}{Silver}{1981}]{Silver1981}
Silver, E.~A. (1981).
\newblock Operations research in inventory management: {A} review and critique.
\newblock {\em Operations Research\/}~{\em 29\/}(4), 628--645.

\bibitem[\protect\citeauthoryear{Silver}{Silver}{2008}]{Silver2008}
Silver, E.~A. (2008).
\newblock Inventory management: {A}n overview, {C}anadian publications,
  practical applications and suggestions for future research.
\newblock {\em INFOR\/}~{\em 46}, 15 -- 28.

\bibitem[\protect\citeauthoryear{Simchi-Levi, Kaminsky, and
  Simchi-Levi}{Simchi-Levi et~al.}{2003}]{Simchi-Levi2003}
Simchi-Levi, D., P.~Kaminsky, and E.~Simchi-Levi (2003).
\newblock {\em Designing and Managing the Supply Chain--Concepts, Strategies
  and Case Studies\/} (2nd ed.).
\newblock McGraw Hill.

\bibitem[\protect\citeauthoryear{Sivazlian}{Sivazlian}{1971}]{Sivazlian1971}
Sivazlian, B. (1971).
\newblock Dimensional and computational analysis in stationary {$(s, S)$}
  inventory problems with gamma distributed demand.
\newblock {\em Management Science\/}~{\em 17\/}(6), 307--311.

\bibitem[\protect\citeauthoryear{Veinott and Wagner}{Veinott and
  Wagner}{1965}]{Veinott1965}
Veinott, A.~F. and H.~M. Wagner (1965).
\newblock Computing optimal (${s,S}$) inventory policies.
\newblock {\em Management Science\/}~{\em 11\/}(5), 525--552.

\bibitem[\protect\citeauthoryear{Wagner, O'Hagan, and Lundh}{Wagner
  et~al.}{1965}]{Wagner1965}
Wagner, H.~M., M.~O'Hagan, and B.~Lundh (1965).
\newblock An empirical study of exactly and approximately optimal inventory
  policies.
\newblock {\em Management Science\/}~{\em 11\/}(7), 690--723.

\bibitem[\protect\citeauthoryear{Xiang, Rossi, Martin-Barragan, and
  Tarim}{Xiang et~al.}{2018}]{Xiang2018}
Xiang, M., R.~Rossi, B.~Martin-Barragan, and S.~A. Tarim (2018).
\newblock Computing non-stationary {$(s,S)$} policies using mixed integer
  linear programming.
\newblock {\em European Journal of Operational Research\/}~{\em 271\/}(2),
  490--500.

\bibitem[\protect\citeauthoryear{Zheng}{Zheng}{1991}]{Zheng1991a}
Zheng, Y.-S. (1991).
\newblock A simple proof for optimality of {$(s,S)$} policies in
  infinite-horizon inventory systems.
\newblock {\em Journal of Applied Probability\/}~{\em 28\/}(4), 802--810.

\bibitem[\protect\citeauthoryear{Zheng and Federgruen}{Zheng and
  Federgruen}{1991}]{Zheng1991}
Zheng, Y.~S. and A.~Federgruen (1991).
\newblock Finding optimal (${s,S}$) policies is about as simple as evaluating a
  single policy.
\newblock {\em Operations Research\/}~{\em 39\/}(4), 654--665.

\end{thebibliography}

%%%%%%%%%%%%%%%%%
\end{document}